\documentclass{article}
\usepackage{amssymb,amsmath,amsthm,hyperref,comment}

\newtheorem{theorem}{Theorem}[section]
\newtheorem{lemma}[theorem]{Lemma}
\newtheorem{proposition}[theorem]{Proposition}

\theoremstyle{definition}

\hypersetup{
    colorlinks=true,
    linkcolor=blue,
    urlcolor=blue
}

\title{The cardinality of a set containing the pairwise sums of four positive integers}
\author{Wouter van Doorn and John Erlbacher}
\date{}

\begin{document}
\maketitle

\begin{abstract}
Choi, Erd\H{o}s and Szemer\'edi showed that there exists an absolute constant $C$ such that for all subsets $A \subseteq \{1, 2, \ldots, 2n\}$ with at least $n+C$ elements, there exist four distinct positive integers whose pairwise sums are all contained in $A$. A proof that one can take $C = 3166$ was recently sketched by the first author, and here we show that we actually have $C = 4$ for all $n \ge 6$, which is optimal. The proof we present was originally conceived of by AI, with the final result proved and formalized by \textnormal{\href{https://aristotle.harmonic.fun/}{Aristotle}}, the formal reasoning agent developed by \textnormal{\href{https://www.harmonic.fun/}{Harmonic}}~\cite{ari}.
\end{abstract}

\section{Introduction}
Let $n$ and $k \ge 3$ be positive integers. We then define $h_k(n)$ to be the smallest integer such that for all sets $A \subseteq \{1, 2, \ldots, 2n\}$ with $\lvert A \rvert \ge n + h_k(n)$, there exist distinct positive integers $b_1, b_2, \ldots, b_k$ with $b_i + b_j \in A$ for $1 \le i < j \le k$. We hereby note that the $b_i$ themselves do not have to be elements of $A$. Various results on $h_k(n)$ were announced by Erd\H{o}s in 1972~\cite[p. 83]{Er72} and published three years later in a paper by Choi, Erd\H{o}s and Szemer\'edi~\cite{ces}. Estimating $h_k(n)$ (or the closely related function $g_k(n)$ where the $b_i$ are not required to be positive; see~\cite[Section 3]{wvd} for more information) is now listed as Erd\H{o}s Problem \#866 on Bloom's website~\cite{bloom}. \\

As for the results by Choi, Erd\H{o}s and Szemer\'edi, they managed to find the growth rates of $h_k(n)$ for all $k \in \{3, 4, 5, 6\}$, as well as upper and lower bounds on $h_k(n)$ for $k \ge 7$. On the other hand, the growth rate of $h_7(n)$ is still not known and neither are asymptotics for $h_k(n)$ for any $k \ge 5$. Specializing to the case $k = 4$, it was proved in~\cite{ces} that $h_4(n)$ is uniformly bounded by some absolute constant. The first author then sketched a proof in~\cite{wvd} that one can take $3166$ for this absolute constant, which was further lowered to $2270$ by the automated theorem proving tool Aristotle from Harmonic~\cite{ari}. The aim of the current paper is to show that we actually have the equality $h_4(n) = 4$ for all $n \ge 6$. In fact, we will prove that this equality holds for all large enough $n$, and verify the remaining ones computationally.

\section{Declaration of AI usage} \label{ai}
The original Lean formalization of the results in~\cite{wvd} (which is still available at~\cite{git0}) was obtained by Aristotle (Harmonic). Claude (Anthropic) then built on these results and formally proved that $h_4(n) = 4$ holds for all $n \ge 331,\!777$. We then gave this formalization to Aristotle again, to clean it up and to see if it could improve it further without any additional human input. This was hugely successful, and it autonomously managed to bring the bound $n \ge 331,\!777$ all the way down to $n \ge 62$. In doing so, Aristotle simplified the proof significantly, improving the result along with it. In the end, the (upper bound) proof consists of five distinct cases that we outline in Section~\ref{strat}. \\

In any case, after these simplifications and improvements, the remaining values were now small enough that they could be brute-forced, thereby showing that $h_4(n) = 4$ holds for all $n \ge 6$. Afterwards, the Aristotle-Claude proof was read and digested by the authors, and the end result is completely human-written. The final formalization of this paper is available at~\cite{git1}.

\section{Main theorem and proof strategy} \label{strat}
In this paper we aim to prove the following result\footnote{Note that the bound of $n \ge 3000$ is quite a bit larger than the $n \ge 62$ we mentioned in Section~\ref{ai}. This is due to various human simplifications and it is still small enough that it is possible to check the smaller $n$ with a computer and some patience.}. 

\begin{proposition} \label{mainprop}
For all $n \ge 3000$ we have $h_4(n) \le 4$. That is, for all $n \ge 3000$ and all sets $A \subseteq \{1, 2, \ldots, 2n\}$ with $\lvert A \rvert \ge n+4$, there exist four distinct positive integers $b_1, b_2, b_3, b_4$ with $b_i + b_j \in A$ for all $1 \le i < j \le 4$.
\end{proposition}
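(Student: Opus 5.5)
The plan is to reformulate the problem as finding three pairs in $A$ with a common sum, and then to do a structural case analysis around the extremal example, which is essentially the set of odd numbers. \textbf{Reformulation.} Write the six sums of $b_1<b_2<b_3<b_4$ as $s_{ij}=b_i+b_j$. Then
\[
s_{12}+s_{34}=s_{13}+s_{24}=s_{14}+s_{23}=S ,
\]
and the $b_i$ are recovered from, e.g., $b_1=(s_{12}+s_{13}-s_{23})/2$. Hence a valid quadruple is the same thing as three pairs $\{p_1,S-p_1\},\{p_2,S-p_2\},\{p_3,S-p_3\}\subseteq A$ with $p_1<p_2<p_3<S/2$ satisfying two conditions:
\begin{itemize}
\item a parity condition: $p_1+p_2+p_3\equiv S \pmod 2$, which makes all $b_i$ integers;
\item a positivity condition: $p_1+p_2+p_3>S$, which gives $b_1\ge 1$.
\end{itemize}
So the whole task is to find, for some $S$, three ``representations'' of $S$ as a sum of two elements of $A$ that are compatible in this sense. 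The positivity condition forces the three smaller elements to lie fairly close to $S/2$, so I will work with sums $S$ whose representations are concentrated near the middle.

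\textbf{Extremal structure.} The natural extremal example is $A$ equal to the odd numbers. It fails because $s_{12},s_{13},s_{23}$ cannot all be odd. The construction for the lower bound $h_4(n)\ge 4$ adds three suitable even numbers to the odds. So I split according to the set $E$ of even elements of $A$ and the set $O$ of odd elements.
\begin{itemize}
\item \textbf{Generic case.} Both $|E|$ and $|O|$ are at least $\varepsilon n$ for a small absolute $\varepsilon$. By a counting/pigeonhole argument over sums $S$ in a middle window, some $S$ has many representations $a+a'$ with $a,a'\in A$ near $S/2$. Among many such pairs I can choose three whose smaller elements have the right parity pattern; $p_1+p_2+p_3>S$ is then automatic because all $p_i$ are close to $S/2$. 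This is where $n\ge 3000$ will be used, to make the averaging bounds beat the error terms.
\item \textbf{Unbalanced case.} One parity class is tiny. Since $|A|\ge n+4$ and each parity class of $\{1,\dots,2n\}$ has size $n$, the dominant class has at most $n$ elements, so the minority class contains at least $4$ elements. A dense $O$ missing only a few odds, say $|O|\ge n-\varepsilon n$, combined with $\ge 4$ elements of the minority class, should give a configuration directly.
\end{itemize}

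\textbf{Direct construction in the unbalanced case.} Take two of the even elements $e<e'$ (or two odds, in the mirror case) to play the role of the sums that must be even. For instance, use $s_{12}$ and $s_{34}$ even, or better two of $s_{12},s_{13},s_{23}$ even and one odd, which is the correct parity pattern when exactly one $b_i$ has different parity from the others. Then solve for the remaining four sums, which must be odd. Each such sum is an affine function of one or two free parameters. Since $O$ misses only $O(\varepsilon n)$ odd numbers and the number of parameter choices is $\gg \varepsilon n$, a counting argument shows some choice lands all four in $O$ while keeping the $b_i$ distinct and positive. Four minority elements give enough freedom to also handle the cases where the minority elements sit near the ends of $[1,2n]$, where positivity or the upper bound $2n$ constrains the parameters.

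\textbf{Main obstacle.} The hard step is the intermediate regime: a parity class is neither tiny nor substantial, or $A$ is dense in odds but the few evens are badly placed (all very small, or all very large). Here the counting in the generic case is too weak and the direct construction has too little room. I would first try an iterative ``removal'' argument: if no configuration exists, each even element forbids a specific linear-size set of odd numbers from lying in $A$. Summing these forbidden sets over the even elements should show that $|O|\le n-|E|+3$, contradicting $|A|\ge n+4$. Making these forbidden sets provably disjoint enough, with a loss of only $3$, is where I expect the real casework and the need for $n$ large to lie.
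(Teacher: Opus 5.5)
There is a genuine gap. The case that carries essentially all of the difficulty is left open, as your last paragraph concedes, and the tools you propose for it do not work as stated.

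Let $t$ be the number of odd integers missing from $A$. Then $A$ has at least $t+4$ evens, not merely $4$, and inside your unbalanced case $t$ can still be of order $\varepsilon n$. Whether your one- or two-parameter families have $\gg t$ members depends on \emph{where} the evens sit. When the evens are packed near one end of $[1,2n]$ (all in $[2,4t+4]$, say, or all in $[2n-4t-2,2n]$), the two-even constructions you describe are confined to parameter ranges of size $O(t)$. The union bound against the $t$ missing odds then no longer closes. For instance, take $b_1+b_2=e$ and $b_3+b_4=e'$ with $e\le e'\le 4t+4$. There are about $ee'/4$ choices of $(b_1,b_3)$, while $t$ missing odds can exclude about $2te$ of them, so you would need $e'>8t$.

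The alternative pattern you mention, two of $s_{12},s_{13},s_{23}$ even and one odd, cannot occur, since $s_{12}+s_{13}+s_{23}=2(b_1+b_2+b_3)$. In a $3$--$1$ parity split the three sums among the like-parity $b_i$ are all even, so three evens of $A$ are needed, not two. The removal argument with total loss exactly $3$ is only a hope; nothing in the proposal controls the overlaps of the forbidden sets.

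Your generic case is also only sketched. Pigeonholing over all of $A$ may produce an even $S$ whose central representations are all odd$+$odd, which violates your parity condition. Pigeonhole inside the evens instead: three representations $S=e+e'$ with $e<e'<2e$ and $e,e'$ even already satisfy both of your conditions.

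The missing idea is to use three evens at once. Suppose $2c_1<2c_2<2c_3$ lie in $A$ with $c_1+c_2>c_3$. Put $b_1=c_1+c_2-c_3$, $b_2=c_1+c_3-c_2$, $b_3=c_2+c_3-c_1$, so that $b_1+b_2$, $b_1+b_3$, $b_2+b_3$ are exactly these evens. Let $b_4=m$ run over the $3t+1$ integers in $[1,6t+2]$ of parity opposite to $b_1$. The other three sums are odd, and each missing odd excludes at most three values of $m$.

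The slack $+4$ is what guarantees the triangle condition $c_1+c_2>c_3$:
\begin{itemize}
\item If $t+4$ evens lie in $[2,4t+4]$ and no three consecutive ones satisfy it, then $c_i\ge F_{i+1}$, contradicting $c_{t+4}\le 2t+2<F_{t+5}$.
\item Three evens in $[2n-4t-2,2n-2t]$ satisfy it automatically once $n\ge 6t+6$.
\end{itemize}

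The paper's proof combines this with three further ingredients:
\begin{itemize}
\item the family $b_1+b_2=b_3+b_4=a$ for an even $a\in[4t+6,2n-4t-4]$, which has at least $t+1$ parameter values, each missing odd killing at most one;
\item a mixed family with $b_1+b_2=2l$ small and $b_3+b_4=a$ large when there are evens near both ends, which has $2t+1$ values, each missing odd killing at most two;
\item a density lemma for many evens in a short interval when $n\le 6t+5$.
\end{itemize}

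You can keep your dense/sparse split. But the sparse case has to be organized by $t$ and by the position of the evens relative to $4t$ and $2n-4t$, with the triangle construction at its core.
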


In the other direction, the lower bound $h_4(n) \ge 4$ is not hard to prove for $n \ge 3$, and we will do so in Section~\ref{lower}. As it is furthermore possible to computationally verify the value of $h_4(n)$ for the remaining $n < 3000$, we get the following main result.

\begin{theorem} \label{main}
For all $n \ge 6$ we have $h_4(n) = 4$. 
\end{theorem}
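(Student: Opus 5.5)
Theorem~\ref{main} has two halves: the upper bound $h_4(n) \le 4$ and the lower bound $h_4(n) \ge 4$. For $n \ge 3000$ the upper bound is Proposition~\ref{mainprop}, which we take as given here; its proof is the bulk of the paper. For $6 \le n < 3000$ the upper bound will come from a computer search. The lower bound will be a uniform construction valid for all $n \ge 3$, which certainly covers $n \ge 6$.

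\textbf{Lower bound.} We need, for each $n$, a set $A \subseteq \{1,\dots,2n\}$ with $|A| = n+3$ that contains no six pairwise sums of four distinct positive integers. First note that if $b_1<b_2<b_3<b_4$, then
\[
(b_1+b_2)+(b_3+b_4) = (b_1+b_3)+(b_2+b_4) = (b_1+b_4)+(b_2+b_3).
\]
Moreover, the three smallest sums $b_1+b_2$, $b_1+b_3$, $b_2+b_3$ have even total $2(b_1+b_2+b_3)$, so an odd number of them cannot all be odd. In particular, not all six sums are odd.

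Start from the $n$ odd numbers $\{1,3,\dots,2n-1\}$. By the parity remark, any valid configuration inside $A$ must use at least one even element. We then add three even numbers, chosen so that no configuration can be completed; natural first candidates are three small evens such as $2,4,6$, or three large ones such as $2n-4,2n-2,2n$. Take the large choice. A configuration has some sum even, so at least one pair sum $b_i+b_j \ge 2n-4$. Since all other sums are at most $2n$, the four numbers are squeezed into a narrow range. This should force a contradiction with the requirement that both the even and the odd sums land in the allowed sets, checked by a finite case analysis on which sums are even.

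Parity of the sums is governed by the parities of the $b_i$: if $k$ of them are odd, the number of odd sums is $k(4-k)$, which is $0$, $3$ or $4$. So the configuration has $6$, $3$ or $2$ even sums, and each even sum must be one of the three large evens. The cases with $3$ or $6$ even sums need that many sums to lie in $\{2n-4, 2n-2, 2n\}$, which should contradict distinctness of the $b_i$. The cases with $2$ even sums, namely $b_1+b_4$ and $b_2+b_3$ with the $b_i$ suitably arranged, need more care. If this direct construction fails, I would instead try a perturbation, such as removing one odd element and adding four evens, tuned to the parity structure.

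\textbf{Remaining range.} For $6 \le n < 3000$, verify $h_4(n) \le 4$ by computer. For each $n$ we must show that every $A$ with $|A| = n+4$ contains a configuration. Direct enumeration is impossible, so the search must be structured. Any four distinct integers $b_i$ with all $b_i+b_j \in [1,2n]$ determine a ``forbidden pattern''. Each such pattern is a $6$-element subset (or fewer, if sums coincide; they are distinct here since the $b_i$ are distinct, though the six sums may still only be a set with repeats excluded accordingly). We must show there is no hitting-set complement of size $n+4$, that is, the maximum pattern-free subset has size at most $n+3$. I would run a branch-and-bound or SAT/ILP solver, using incremental structure in $n$.

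\textbf{Main obstacle.} The main obstacle is this computation for $n$ in the thousands. I expect one needs a structural reduction: show computationally that for moderate $n$, any extremal set has near-odd structure, and then propagate that structure upward. Alternatively, refine the proof of Proposition~\ref{mainprop} itself to lower its threshold toward $62$, and brute force only $n \le 61$.
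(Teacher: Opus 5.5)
Your handling of the upper bound matches the paper: Proposition~\ref{mainprop} for $n\ge 3000$ and a computer check for $6\le n<3000$. The paper also treats that computation as a black box, so that part is not where the problem lies.

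The genuine gap is the lower bound, because the set you commit to, $A=\{1,3,\dots,2n-1\}\cup\{2n-4,2n-2,2n\}$, is not a valid construction. For every $n\ge 4$, the four consecutive integers $n-3,n-2,n-1,n$ have pairwise sums $2n-5,2n-4,2n-3,2n-3,2n-2,2n-1$, all of which lie in $A$. This is exactly the two-even-sums case you flagged as needing care.

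Your claim that three or six even sums would contradict distinctness is also false. The numbers $n-3,n-1,n+1$ have pairwise sums exactly $2n-4,2n-2,2n$, and adjoining $n-2$ gives a second complete configuration inside $A$. Your other candidate $\{2,4,6\}$ fails as well, since $1,2,3,4$ has sums $3,4,5,5,6,7$. The fallback ``perturbation'' is never specified, so as written $h_4(n)\ge 4$ is not proved, and hence neither is the equality.

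The missing idea is that $2$ is never a sum of two distinct positive integers. Including it pads $\lvert A\rvert$ up to $n+3$ for free, so only two genuinely usable evens, $2n-2$ and $2n$, are needed. Take $A=\{1,3,\dots,2n-1\}\cup\{2,2n-2,2n\}$.
\begin{itemize}
\item If three of the $b_i$ share a parity, their three pairwise sums are three distinct even elements of $A$. Since $2$ is excluded, only $2n-2$ and $2n$ are available, which is a contradiction.
\item Otherwise there are evens $b_1<b_2$ and odds $b_3<b_4$ with $b_1+b_2$ and $b_3+b_4$ both in $\{2n-2,2n\}$. Since $b_1\le b_2-2$, this forces $b_2\ge n$, and likewise $b_4\ge n$. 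As $b_2\ne b_4$ by parity, $b_2+b_4\ge 2n+1$, which is outside $A$.
\end{itemize}
With $2n-4$ also allowed, the threshold drops from $n$ to $n-1$. Then the pair $\{n-1,n\}$ gets through with odd sum $2n-1$, which is precisely how your construction fails.
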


For a quick overview of the proof of Theorem~\ref{main}, let $$t := \left \lvert \{1, 3, \ldots, 2n-1\} \setminus A \right \rvert$$ denote the number of odd integers missing from $A$, so that any $A \subseteq \{1, 2, \ldots, 2n\}$ with at least $n+4$ elements contains at least $t+4$ even integers. We further define the subsets $E_1, E_2, E_3 \subseteq A$ of even integers contained in the intervals $$[2, 4t + 4], \qquad [4t + 6, 2n - 4t - 4] \qquad \text{and} \qquad [2n - 4t - 2, 2n]$$ respectively. We then split up the argument in five distinct and increasingly specific cases.

\[
\begin{array}{ll}
\text{Case I:}
    & E_2 \neq \varnothing, \\ [2mm]
\text{Case II:}
    & n \le 6t+5, \\ [2mm]
\text{Case III:}
    & E_2 = \varnothing, n \ge 6t+6, \text{ and } E_3 = \varnothing, \\ [2mm]
\text{Case IV:}
    & E_2 = \varnothing, n \ge 6t+6, E_3 \neq \varnothing \text{ and } \lvert E_1 \rvert \le 1, \\ [2mm]
\text{Case V:}
    & E_2 = \varnothing, n \ge 6t+6, E_3 \neq \varnothing \text{ and } \lvert E_1 \rvert \ge 2.
\end{array}
\]

As for the first case, similar observations in regards to this middle set $E_2$ can be found in the proofs of~\cite[Theorem 2]{ces} and~\cite[Theorem 8]{wvd}. For the second case we recall~\cite[Lemma 7]{wvd}, which quantifies the fact that $A$ contains the pairwise sums of (not necessarily positive) integers if $A$ contains sufficiently many even integers in a short interval, and here we will give a similar argument that works for positive integers. However, before we deal with any of the five upper bound cases, let us first look at the lower bound.

\section{Lower bound} \label{lower}
In this section we show that $h_4(n) \ge 4$, which is essentially already done in~\cite[Theorem 3]{wvd}. We give the full proof for completeness.

\begin{lemma}
For all $n \ge 3$ we have $h_4(n) \ge 4$. That is, for all $n \ge 3$ there exists a set $A \subseteq \{1, 2, \ldots, 2n\}$ with $n+3$ elements for which no four distinct positive integers $b_1, b_2, b_3, b_4$ exist such that $b_i + b_j \in A$ for all $1 \le i < j \le 4$.
\end{lemma}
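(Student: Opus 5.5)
We take $A$ to be all odd numbers in $\{1, 2, \ldots, 2n\}$ together with three even numbers:
$$A := \{1, 3, \ldots, 2n-1\} \cup \{2, 2n-2, 2n\}.$$
For $n \ge 3$ these three even numbers are distinct, so $\lvert A \rvert = n+3$. We then suppose that distinct positive integers $b_1, b_2, b_3, b_4$ have all six pairwise sums in $A$, and we derive a contradiction by looking at the parities of the $b_i$.

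The sum $b_i + b_j$ is even exactly when $b_i$ and $b_j$ have the same parity. So the split of the $b_i$ into parity classes is $4$--$0$, $3$--$1$ or $2$--$2$, and we treat these three cases in turn.

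\emph{Case $4$--$0$.} All six sums are even. Order $b_1 < b_2 < b_3 < b_4$. Then
$$b_1+b_2 < b_1+b_3 < b_1+b_4 < b_2+b_4 < b_3+b_4$$
are five distinct even elements of $A$. This is impossible, since $A$ contains only three even numbers.

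\emph{Case $3$--$1$.} Say $b_1, b_2, b_3$ have the same parity. Their three pairwise sums are distinct, so they must be exactly $2$, $2n-2$ and $2n$. Any three numbers of the form $b_i+b_j$ with $b_i>0$ satisfy the triangle inequality: the largest is smaller than the sum of the other two, since $(b_1+b_3)+(b_2+b_3) > b_1+b_2$ and similarly for the other orderings. But $2 + (2n-2) = 2n$ is not larger than $2n$, a contradiction.

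\emph{Case $2$--$2$.} Say $b_1, b_2$ share a parity and $b_3, b_4$ share the other parity. The even sums are $s = b_1+b_2$ and $s' = b_3+b_4$. Since the $b_i$ are distinct positive integers of the same parity within each pair, we have $s, s' \ge 1+3 = 4$, so $s, s' \in \{2n-2, 2n\}$. The four cross sums $b_1+b_3$, $b_2+b_4$, $b_1+b_4$, $b_2+b_3$ are odd elements of $A$, hence each is at most $2n-1$. Moreover
$$(b_1+b_3)+(b_2+b_4) = (b_1+b_4)+(b_2+b_3) = s+s' \ge 4n-4.$$
\begin{itemize}
\item If $s \ne s'$, then $s+s' = 4n-2$. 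Each of the two pairs of cross sums above is a pair of numbers at most $2n-1$ with total $4n-2$, so all four cross sums equal $2n-1$. In particular $b_1+b_3 = b_1+b_4$, giving $b_3=b_4$, a contradiction.
\item If $s = s'$, the total is $2s$. The cross sums $b_1+b_3$ and $b_2+b_4$ are odd, hence unequal to $s$ and to each other, so one of them is at least $s+1$. If $s = 2n$, this cross sum is at least $2n+1$, which exceeds $2n-1$. If $s = 2n-2$, one cross sum in each pair is at least $2n-1$, so each pair is $\{2n-3, 2n-1\}$. Writing $x = b_1+b_3$, we have $b_1+b_4 = x + (b_4-b_3)$ with $b_4-b_3 \ne 0$ even. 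Since $b_1+b_4$ also lies in $\{2n-3, 2n-1\}$, this forces $b_4-b_3 = \pm 2$. By symmetry take $b_4 = b_3+2$, so $b_3 = n-2$ and $b_4 = n$. Then $b_1 + b_4 \le 2n-1$ gives $b_1 \le n-1$, and likewise $b_2 \le n-1$. Since $b_1, b_2$ have the parity of $n-1$, are distinct, and satisfy $b_1 + b_2 = 2n-2$, this is impossible.
\end{itemize}

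The step I expect to need the most care is this last subcase $s = s' = 2n-2$ of the $2$--$2$ case. The cleanest route is probably the direct reduction used above: pin down $b_3, b_4$ from the cross sums, then derive the contradiction from $b_1 + b_2 = 2n-2$ together with $b_1, b_2 \le n-1$.
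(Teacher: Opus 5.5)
Your proof is correct. It uses the same set $A=\{1,3,\ldots,2n-1\}\cup\{2,2n-2,2n\}$ and the same split by parity classes as the paper, and every step checks out. Where you differ is in how the cases are closed, and the paper's closing arguments are considerably shorter.

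For three $b_i$ of equal parity, the paper simply observes that a sum of two distinct positive integers is never $2$. So three distinct even sums would have to lie in the two-element set $\{2n-2,2n\}$, which is impossible. This also covers your $4$--$0$ case, so you need neither the separate count of five sums nor the triangle-inequality argument.

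For the $2$--$2$ case, write $b_1<b_2$ and $b_3<b_4$ for the two same-parity pairs. Then $b_1+b_2\ge 2n-2$ forces $b_2\ge n$, and likewise $b_4\ge n$. Since $b_2\ne b_4$ (opposite parity), the odd sum $b_2+b_4\ge 2n+1$ lies outside $A$. That single observation replaces your cross-sum identity and the whole analysis of the subcases $s\ne s'$ and $s=s'$. In fact, your final contradiction in the subcase $s=s'=2n-2$ (namely $b_4=n$ while $b_1,b_2\le n-1$) is exactly this observation, applied only at the very end.

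Your longer route is valid, but it gains nothing over the paper's argument.
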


\begin{proof}
We use the set
\begin{equation} \label{adef2}
A := \{1, 3, \ldots, 2n-1\} \cup \{2, 2n-2, 2n\}
\end{equation}
with $n+3$ elements, and claim that no four positive integers exist whose pairwise sums are contained in $A$. \\

Let $b_1, b_2, b_3, b_4$ be four distinct positive integers and assume by contradiction that we have $b_i + b_j \in A$ for all $1 \le i < j \le 4$. If three of the $b_i$ have the same parity, then that would give us three pairwise even sums, while no sum can be equal to $2$. Hence, let us assume that $b_1 < b_2$ are even and $b_3 < b_4$ are odd. Then $\{b_1 + b_2, b_3 + b_4\} \subseteq \{2n-2, 2n\}$ which implies that both $b_2$ and $b_4$ are at least $n$. Since they are not equal to one another, the sum $b_2 + b_4$ is larger than $2n$ and in particular not in $A$.
\end{proof}

For the rest of this paper we will focus on the upper bound, and prove Proposition~\ref{mainprop}.

\section{Upper bound}
\subsection{Case I: Evens in the middle}
As mentioned, we start off with the case where there exists an even integer $$a \in E_2 = A \cap [4t + 6, 2n - 4t - 4] \cap 2\mathbb{N}.$$

\begin{proof}[Proof of Case I]
First assume $a \le n+1$. Then for all $1 \le m < \frac{a}{4}$, consider the four distinct positive integers $$b_1 := 1, \qquad b_2 := a-1, \qquad b_3 := 2m, \qquad b_4 := a-2m.$$ Then $b_1 + b_2 = b_3 + b_4 = a$, while the other four pairwise sums are all odd and contained in $\{1, 3, \ldots, 2n-1\}$. Indeed, $b_1$ and $b_2$ are odd, $b_3$ and $b_4$ are even, and $$b_i + b_j \le b_2 + b_4 \le 2a - 3 \le 2n-1.$$ Moreover, for distinct $m$, these other four pairwise sums are disjoint from one another. Since there are $\lfloor \frac{a-1}{4} \rfloor \ge t+1$ possibilities for $m$, while $A$ only misses $t$ odd integers, there must be at least one $m$ for which all pairwise sums are contained in $A$. \\

When $a \ge n+2$, we instead consider the four positive integers $$b_1 := a-n, \qquad b_2 := n, \qquad b_3 := a-n-1+2m, \qquad b_4 := n+1-2m,$$ this time with $1 \le m < \frac{2n+2-a}{4}$. To see that they are distinct, we have $b_1 < b_2$ and, by $m < \frac{2n+2-a}{4}$, $b_3 < b_4$, while $\{b_1, b_2\} \cap \{b_3, b_4\} = \varnothing$ due to parity reasons. We similarly have $b_1 + b_2 = b_3 + b_4 = a$, with the other pairwise sums once again odd and contained in $\{1, 3, \ldots, 2n-1\}$. By the same argument as before, there exists an $m$ for which all pairwise sums are in $A$, as the number of possibilities is $\left \lfloor \frac{2n+1-a}{4} \right \rfloor \ge t+1$.
\end{proof}

\subsection{Case II: Many missing odds} \label{many}
Recall that we assume $n \le 6t+5$ in this case, which is the only case in which we need to assume that $n$ is sufficiently large. We first prove a version of~\cite[Lemma 7]{wvd} that works for positive integers, using $$f_3(x) := \sqrt{2x + \frac{9}{4}} + \frac{3}{2} \qquad \text{and} \qquad f_{4}(x) := \sqrt{xf_3(x) + \frac{1}{4}} + \frac{1}{2}.$$ With these functions it is quickly verified algebraically that, for all $x \ge 0$, we have the (in)equalities
\begin{align*}
f_3(x) &\ge 3, \\
\frac{f_3(x)(f_3(x)-3)}{2} &= x, \\
f_4(x) &\ge 1, \\
f_4(x)(f_4(x)-1) &= xf_3(x).
\end{align*}
In particular, as the relevant functions are increasing, we obtain the following inequalities.

\begin{lemma} \label{abcformula}
Let $x, y, z$ be non-negative real numbers. Then $$y > f_3(x) \qquad \text{implies} \qquad \frac{y(y-1)}{2} > x + y,$$ while $$z > f_4(x) \qquad \text{implies} \qquad z(z-1) > xf_3(x).$$
\end{lemma}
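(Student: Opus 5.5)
The plan is to use the two stated identities together with monotonicity. Each implication then reduces to the fact that a certain quadratic in $y$ (resp.\ $z$) is strictly increasing beyond its vertex, and that $f_3(x)$ (resp.\ $f_4(x)$) already lies past that vertex.

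For the first implication, consider $g(y) := \frac{y(y-1)}{2} - y = \frac{y(y-3)}{2}$. The claim $\frac{y(y-1)}{2} > x + y$ is equivalent to $g(y) > x$. The identity $\frac{f_3(x)(f_3(x)-3)}{2} = x$ says exactly that $g(f_3(x)) = x$. Since $g$ is a quadratic with vertex at $y = \tfrac32$, it is strictly increasing on $[\tfrac32, \infty)$. We have $f_3(x) \ge 3 > \tfrac32$, so $y > f_3(x)$ places both $y$ and $f_3(x)$ in this range. Strict monotonicity then gives $g(y) > g(f_3(x)) = x$, as desired.

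For the second implication, take $h(z) := z(z-1)$, which is strictly increasing on $[\tfrac12,\infty)$. The identity gives $h(f_4(x)) = x f_3(x)$. Since $f_4(x) \ge 1 > \tfrac12$, the hypothesis $z > f_4(x)$ yields $h(z) > h(f_4(x)) = x f_3(x)$. Non-negativity of $x$ is used only to ensure that the square roots defining $f_3$ and $f_4$ are real; in particular $x f_3(x) + \tfrac14 \ge 0$ because $f_3(x) \ge 3 > 0$.

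The only step requiring care is the verification of the four listed (in)equalities, which the paper asserts are quickly checked and which I may assume. If I did check them, I would square: from $f_3 - \tfrac32 = \sqrt{2x + \tfrac94}$ one gets $f_3^2 - 3f_3 = 2x$, and similarly $(f_4 - \tfrac12)^2 = x f_3 + \tfrac14$ gives $f_4^2 - f_4 = x f_3$. The lower bounds follow from $\sqrt{\tfrac94} = \tfrac32$ and $\sqrt{\tfrac14} = \tfrac12$. With these in hand there is no real obstacle; the argument is a short monotonicity check.
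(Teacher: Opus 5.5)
Your proof is correct and follows the paper's approach. The paper likewise derives the lemma from the identities $\frac{f_3(x)(f_3(x)-3)}{2} = x$ and $f_4(x)(f_4(x)-1) = xf_3(x)$, the bounds $f_3(x) \ge 3$ and $f_4(x) \ge 1$, and the fact that the relevant quadratics are increasing; your rewriting of $\frac{y(y-1)}{2} > x+y$ as $\frac{y(y-3)}{2} > x$ and the vertex checks simply spell out what the paper calls ``quickly verified.''
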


We need two more intermediate lemmas.

\begin{lemma} \label{splussix}
Let $U$ be a finite set of integers, let $w$ be a non-zero integer, and let $l \in \{0, 1, 2, 3\}$ be the number of elements of $\{w, 2w, 4w\}$ that occur as a difference between two elements of $U$. If each of the equations $$u - v = 3w \qquad \text{and} \qquad u - v = 6w$$ has at most two solutions $(u, v) \in U^2$, then there are at most $\lvert U \rvert + l(l-1)$ pairs $(u, v) \in U^2$ with $u - v \in \{w, 2w, 4w\}$.
\end{lemma}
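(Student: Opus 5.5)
We reduce first to $w>0$: replacing $U$ by $-U$ changes neither the number of solutions of $u-v=3w$ or $u-v=6w$, nor $l$, nor the number of pairs with $u-v\in\{w,2w,4w\}$, except for swapping the roles of $(u,v)$ and $(v,u)$. Since $w\neq 0$, a pair with $u-v\in\{w,2w,4w\}$ is determined by its unordered endpoints together with its difference. So it suffices to bound the number $E$ of edges of the graph on $U$ in which, for each $d\in\{w,2w,4w\}$, we join $u$ and $u-d$ whenever both lie in $U$. The plan is a charging argument: assign each edge to one of its endpoints, so that $E=\sum_{x\in U}c(x)$ where $c(x)$ is the number of edges charged to $x$.

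The charging rule is chosen so that any two edges charged to the same element produce a solution of $u-v=3w$ or $u-v=6w$:
\begin{itemize}
\item a $w$-edge $\{x,x-w\}$ is charged to its larger endpoint $x$;
\item a $4w$-edge $\{x,x-4w\}$ is charged to its larger endpoint $x$;
\item a $2w$-edge $\{x,x+2w\}$ is charged to its smaller endpoint $x$.
\end{itemize}
Each $x$ receives at most one edge of each difference, so $c(x)\le 3$. There are three possible conflicts at $x$:
\begin{enumerate}
\item (type A) a $w$-edge and a $4w$-edge: then $x-w,\,x-4w\in U$, giving the solution $(x-w,\,x-4w)$ of $u-v=3w$;
\item (type B) a $w$-edge and a $2w$-edge: then $x+2w,\,x-w\in U$, giving the solution $(x+2w,\,x-w)$ of $u-v=3w$;
\item (type C) a $2w$-edge and a $4w$-edge: then $x+2w,\,x-4w\in U$, giving the solution $(x+2w,\,x-4w)$ of $u-v=6w$.
\end{enumerate}
Within a fixed type, the map from $x$ to its solution is injective, since $x$ is recovered by a fixed translation.

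For the counting step, note that $c(x)-1\le\binom{c(x)}{2}$ whenever $c(x)\ge 1$. Hence
\[E=\sum_{x}c(x)\le \lvert U\rvert+\#\{\text{conflicts}\},\]
where conflicts are counted over all $x$ and all types. We then split by $l$.
\begin{itemize}
\item If $l\le 1$, only one difference occurs, so there are no conflicts and $E\le\lvert U\rvert$.
\item If $l=2$, only one conflict type can occur. By injectivity and the hypothesis that each of the two equations has at most two solutions, there are at most $2=l(l-1)$ conflicts.
\item If $l=3$, type C contributes at most $2$ conflicts. Types A and B together give at most $2+2=4$: each of the at most two solutions of $u-v=3w$ is the image of at most one $x$ of type A and at most one $x$ of type B. This totals at most $6=l(l-1)$.
\end{itemize}

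The main obstacle, in my view, is finding a charging orientation under which every possible clash at a single element yields a $3w$ or $6w$ difference rather than something useless like $5w$ or $2w$. Charging all edges to the larger endpoint fails, because a $2w$/$4w$ clash only gives difference $2w$. The mixed orientation above is the fix, and checking the three clash types is then routine. A minor point to verify is that a type A and a type B conflict may share the same $3w$-solution, namely when $y=x-3w$. This is exactly why the bound for $l=3$ uses "at most one preimage per type" rather than full injectivity.
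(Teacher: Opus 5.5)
Your proof is correct and is essentially the paper's argument. Your charging sets are the mirror image ($w\mapsto -w$) of the paper's $U_1=\{u: u+w\in U\}$, $U_2=\{u: u-2w\in U\}$, $U_3=\{u: u+4w\in U\}$; your inequality $c(x)\le 1+\binom{c(x)}{2}$ is the paper's bound $\lvert U_1\rvert+\lvert U_2\rvert+\lvert U_3\rvert\le\lvert U_1\cup U_2\cup U_3\rvert+\sum_{i<j}\lvert U_i\cap U_j\rvert$; and your three conflict types are its three pairwise intersections, which inject into the solutions of $u-v=3w$ or $u-v=6w$, followed by the same case split on $l$. One small point: your reduction to $w>0$ is misworded, since negating $U$ alone does not change the sign of $w$; you want $w\mapsto -w$. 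The reduction is unnecessary anyway, because your charging rules and conflict computations are sign-independent.
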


\begin{proof}
Let us define the three sets
\begin{align*} 
U_1 &:= \{u \in U : u+w \in U\}, \\
U_2 &:= \{u \in U : u-2w \in U\}, \\
U_3 &:= \{u \in U : u+4w \in U\}.
\end{align*}

The number of pairs $(u, v) \in U^2$ with $u - v \in \{w, 2w, 4w\}$ is then equal to the sum $\lvert U_1 \rvert + \lvert U_2 \rvert + \lvert U_3 \rvert$, which is upper bounded by $$\lvert U_1 \cup U_2 \cup U_3 \rvert + \lvert U_1 \cap U_2 \rvert + \lvert U_1 \cap U_3 \rvert + \lvert U_2 \cap U_3 \rvert,$$ thanks to the inclusion-exclusion principle. Now, $\lvert U_1 \cup U_2 \cup U_3 \rvert \le \lvert U \rvert$, so it suffices to calculate the cardinalities of the three pairwise intersections. As every element in either $U_1 \cap U_2$ or $U_1 \cap U_3$ gives rise to a solution of $u - v = 3w$, while every element in $U_2 \cap U_3$ gives rise to a solution of $u - v = 6w$, we find $\lvert U_i \cap U_j \rvert \le 2$ for all $1 \le i < j \le 3$, by the assumption in the lemma statement. Moreover, at most one of these pairwise intersections can be non-empty for $l = 2$, while all three are empty for $l \in \{0, 1\}$. Hence, regardless of whether $l = 0, 1, 2$ or $3$, we deduce 
\begin{equation*}
\lvert U_1 \cap U_2 \rvert + \lvert U_1 \cap U_3 \rvert + \lvert U_2 \cap U_3 \rvert \le l(l-1). \qedhere
\end{equation*}
\end{proof}

By combining Lemma~\ref{abcformula} and~\ref{splussix}, we can prove a version of~\cite[Lemma 7]{wvd} for positive integers.

\begin{lemma} \label{lotsofevens}
Let $E$ be a set of positive even integers. If $$\lvert E \rvert > f_4(\max E - \min E),$$ then there exist four distinct positive integers $b_1, b_2, b_3, b_4$ whose six pairwise sums $b_i + b_j$ are all contained in $E$.
\end{lemma}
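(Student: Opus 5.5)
The plan is to look for the four integers in the "rectangle" shape
$$b_1 = x, \qquad b_2 = x + d_1, \qquad b_3 = x + d_2, \qquad b_4 = x + d_1 + d_2,$$
or in a close variant of it. In this shape $b_1+b_4 = b_2+b_3$, and the six pairwise sums are determined by a few even numbers of $E$ in a simple additive pattern. Rather than writing all of this out, I would first translate the required configuration into a condition on $E$. The condition should be: there are pairs $(u_1,v_1),(u_2,v_2)$ of elements of $E$ sharing a common midpoint, such that one further difference relation holds among the four endpoints. I then need to show how to read off $b_1,\dots,b_4$ from such a configuration. At this point I would also record three requirements: positivity of the $b_i$, integrality (automatic, since all elements of $E$ are even), and distinctness of the $b_i$. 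Distinctness fails exactly when certain differences coincide.

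The first counting step is a pigeonhole over midpoints. Put $z = \lvert E\rvert$ and $x = \max E - \min E$. The $z(z-1)$ ordered pairs $(u,v)\in E^2$ with $u\ne v$ have their midpoints $(u+v)/2$ among at most about $x$ integer values. The hypothesis $z > f_4(x)$ gives $z(z-1) > xf_3(x)$ via Lemma~\ref{abcformula}, so some midpoint $m$ carries more than $f_3(x)$ pairs. I will need to check the normalisation here: ordered versus unordered pairs, and the exact number of possible midpoints. If it is off by a factor, I would adjust by also counting degenerate pairs $u=v$, or by restricting to midpoints strictly between $\min E$ and $\max E$. Let $U$ be the set of the resulting $y > f_3(x)$ "upper endpoints" $m+d$ with $m\pm d\in E$.

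The second step is a pigeonhole over differences inside $U$. By Lemma~\ref{abcformula}, $y>f_3(x)$ gives $\binom{y}{2} > x + y$. All differences of elements of $U$ lie in a range of size at most $x$, so many pairs $(u,v)\in U^2$ share a difference. Each such repeated difference $w$ gives a candidate quadruple $b_1,\dots,b_4$ whose six sums lie in $E$. A candidate is bad only if two of the $b_i$ coincide or one is non-positive. Unwinding the parametrisation, I expect these degeneracies to correspond exactly to the patterns $u-v\in\{w,2w,4w\}$, together with the coincidences $3w$ and $6w$ that appear in Lemma~\ref{splussix}.

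This is where Lemma~\ref{splussix} enters. If either equation $u-v = 3w$ or $u-v=6w$ has at least three solutions in $U$, I would use those solutions directly to build a non-degenerate quadruple. Otherwise the lemma bounds the number of bad pairs for a given $w$ by $\lvert U\rvert + l(l-1)$. The surplus $\binom{y}{2} - (x+y) > 0$ should then leave a good pair.

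I expect the main obstacle to be this bookkeeping: matching precisely which coincidences among the $b_i$ (and which positivity failures) correspond to differences $w,2w,4w$. I also need to make sure the counting yields the additive "$+y$" and not something larger. My first attempt would be to write the $b_i$ explicitly in terms of $m$, $d$ and $w$, list every equality $b_i=b_j$ as a linear relation, and check that each one forces a difference in $\{w,2w,4w\}$ (or $3w$, $6w$).
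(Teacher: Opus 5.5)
\textbf{There is a genuine gap.} You never fix the configuration or the formula for the $b_i$. The step meant to do the work is a repeated difference $w$ in $U$, with Lemma~\ref{splussix} controlling degeneracies at $w,2w,4w,3w,6w$. That is a guess, and it does not fit the set-up your first step produces.

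The six sums of any four numbers split into three pairs with common midpoint $m=\frac{1}{2}(b_1+b_2+b_3+b_4)$. This is three pairs, not two pairs plus a difference relation. Conversely, suppose $m\pm d_1, m\pm d_2, m\pm d_3\in E$ with $d_1>d_2>d_3>0$. Then
\[
\tfrac{1}{2}(m-d_1-d_2+d_3),\quad \tfrac{1}{2}(m-d_1+d_2-d_3),\quad \tfrac{1}{2}(m+d_1-d_2-d_3),\quad \tfrac{1}{2}(m+d_1+d_2+d_3)
\]
have exactly these six sums, and they are automatically distinct integers. Your rectangle is the degenerate case where the third pair is $\{m,m\}$, so it needs $m\in E$, which the midpoint pigeonhole does not supply.

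So once one midpoint carries three pairs, no further difference structure is needed, and $w,2w,4w,3w,6w$ have nothing to control. The real obstruction is positivity. The smallest number equals $\frac{1}{2}(a+b-c)$, where $a<b<c$ are the lower endpoints $m-d_1,m-d_2,m-d_3$. This is non-positive for every triple when the lower endpoints are, say, $2,4,6,10,16,\dots$. Neither Lemma~\ref{abcformula} nor Lemma~\ref{splussix} addresses this, so your bookkeeping cannot deliver positivity.

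There is also a counting issue. Midpoints of two even numbers take about $\max E-\min E$ values, twice as many as the positive differences. You only reach $y>f_3(x)$ by counting both endpoints of each pair.

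\textbf{The missing idea.} Pigeonhole on differences rather than midpoints, and aim for a cube in $E$ rather than a rectangle of $b$'s. Positive differences of even numbers take only $\frac{1}{2}(\max E-\min E)$ values. Hence some $d_1$ occurs more than $f_3(\max E-\min E)$ times; put $E_0=\{e\in E: e+d_1\in E\}$.

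It then suffices to find $\{d_0,d_0+d_2,d_0+d_3,d_0+d_2+d_3\}\subset E_0$ with $d_1,d_2,d_3$ distinct. Then $E$ contains the whole cube $d_0+\{0,d_1\}+\{0,d_2\}+\{0,d_3\}$, and $d_0/2,\ d_0/2+d_1,\ d_0/2+d_2,\ d_0/2+d_3$ work. Positivity is free because $d_0\in E$.

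Lemma~\ref{splussix} is applied with $U=E_0$ and $w=d_1/2$. Here $w$ is fixed by the first step, not a repeated difference found later. The lemma bounds the pairs in $E_0$ whose difference lies in the bad set $\{d_1/2,d_1,2d_1\}$. If that bound is exceeded, then $3d_1/2$ or $3d_1$ occurs three times.

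Otherwise, $\frac{|E_0|(|E_0|-1)}{2}>(\max E_0-\min E_0)+|E_0|$ holds by Lemma~\ref{abcformula}. This forces some $d_2\notin\{d_1/2,d_1,2d_1\}$ to occur three times, say $e_i+d_2\in E_0$ for $e_1<e_2<e_3$ in $E_0$. Excluding $d_1/2$ and $2d_1$ is exactly what guarantees that some $e_j-e_i$ avoids $\{d_1,d_2\}$. That difference serves as $d_3$, with $d_0=e_i$.
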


\begin{proof}
With $E$ as in the statement of the lemma, let $E_0 \subset E$ be a largest subset of $E$ for which there exists a positive (even) integer $d_1$ such that $e + d_1 \in E$ for all $e \in E_0$. Then we claim that $$\lvert E_0 \rvert > f_3(\max E - \min E) \ge f_3(\max E_0 - \min E_0).$$ The second inequality is easily seen by the fact that $f_3$ is an increasing function, while the first inequality follows from the pigeonhole principle: there are $\frac{\max E - \min E}{2}$ possible (positive) differences between elements of $E$, while there are $\frac{\lvert E \rvert (\lvert E \rvert - 1)}{2}$ pairs $e_1 < e_2 \in E$. Since $$\frac{\lvert E \rvert (\lvert E \rvert - 1)}{2} > f_3(\max E - \min E) \left(\frac{\max E - \min E}{2}\right)$$ by Lemma~\ref{abcformula}, there must be a difference that occurs more than ${f_3(\max E - \min E)}$ times. \\

With $E_0 \subset E$ and $d_1$ as above, we now aim to show that one can find positive integers $d_0, d_2, d_3$ with $d_1, d_2, d_3$ all distinct such that $$\{d_0, d_0+d_2, d_0+d_3, d_0+d_2+d_3 \} \subset E_0.$$ In analogy with~\cite[Lemma A]{ces}, if such integers exist, then by the assumption $E_0 + d_1 \subset E$ it is quickly verified that we can take $$b_1 := \frac{d_0}{2}, \hspace{12pt} b_2 := \frac{d_0}{2} + d_1, \hspace{12pt} b_3 := \frac{d_0}{2} + d_2, \hspace{12pt} b_4 := \frac{d_0}{2} + d_3,$$ to finish the proof. \\

As in the statement of Lemma~\ref{splussix}, let $l \in \{0, 1, 2, 3\}$ be the number of elements of $\left\{\frac{d_1}{2}, d_1, 2d_1 \right\}$ that occur as a difference between two elements of $E_0$. If $\frac{d_1}{2}, d_1$ and $2d_1$ occur more than a combined $\lvert E_0 \rvert + l(l-1)$ number of times as differences between elements in $E_0$, then there exists a $d_2 \in \left \{\frac{3d_1}{2}, 3d_1 \right\}$ that occurs at least three times, by applying Lemma~\ref{splussix} with $U := E_0$ and $w := \frac{d_1}{2}$. Otherwise, as before, we note there are $\frac{\max E_0 - \min E_0}{2}$ possible differences between elements of $E_0$, while there are $\frac{\lvert E_0 \rvert (\lvert E_0 \rvert - 1)}{2}$ pairs $e_1 < e_2 \in E_0$. From the inequalities
\begin{align*}
\frac{\lvert E_0 \rvert (\lvert E_0 \rvert - 1)}{2} &> (\max E_0 - \min E_0) + \lvert E_0 \rvert \\
&\ge 2 \left(\frac{\max E_0 - \min E_0}{2} - l\right) + \lvert E_0 \rvert + l(l-1),
\end{align*}
the first of which is implied by Lemma~\ref{abcformula}, it still follows by the pigeonhole principle that there exists a difference $d_2 \notin \left\{\frac{d_1}{2}, d_1, 2d_1 \right\}$ that occurs at least three times. Hence, let $e_1 < e_2 < e_3 \in E_0$ and $d_2 \notin \left\{\frac{d_1}{2}, d_1, 2d_1 \right\}$ be such that $e_i + d_2 \in E_0$ for $1 \le i \le 3$. Then we claim that at least one of $$e_2 - e_1, \qquad e_3 - e_1, \qquad e_3 - e_2$$ must be different from both $d_1$ and $d_2$. To see this, we note that if two of them are equal to $d_1$, then the third one is equal to either $\frac{d_1}{2}$ or $2d_1$, which are both different from $d_2$ by assumption. Since the argument is the same with $d_1$ and $d_2$ swapped, this proves the claim. \\

To finish the proof, let $d_3 := e_j - e_i \notin \{d_1, d_2\}$ be such a (positive) difference, and set $d_0 := e_i$. Then we conclude $$\{d_0, d_0+d_2, d_0+d_3, d_0+d_2+d_3 \} = \{e_i, e_i + d_2, e_i + (e_j - e_i), e_i + d_2 + (e_j - e_i) \} \subset E_0,$$ as desired.
\end{proof}

We are now finally ready to prove Case II.

\begin{proof}[Proof of Case II]
Assume $n \ge 3000$ and let $2c_1 < 2c_2 < \cdots < 2c_r$ be the even elements of $A \cap [2, 2n - 6t]$. We further write $s$ for the number of even integers of $A$ contained in $[2n - 6t + 2, 2n]$, which gives $r+s \ge t+4$. Now, if there exists an $i$ with $1 \le i \le r-2$ and $c_i + c_{i+1} > c_{i+2}$, we set $$b_1 := c_i + c_{i+1} - c_{i+2}, \hspace{12pt} b_2 := c_i + c_{i+2} - c_{i+1}, \hspace{12pt} b_3 := c_{i+1} + c_{i+2} - c_{i}, \hspace{12pt} b_4 := m,$$ where $m$ is an integer with $1 \le m \le 6t+2$ and $m \not \equiv b_1 \pmod{2}$. All $b_i$ are positive by the assumption $c_i + c_{i+1} > c_{i+2}$, while it is clear that the sums $b_1 + b_2, b_1 + b_3, b_2 + b_3$ are all contained in $A$. Moreover, all other pairwise sums are odd, while they are at most $$b_3 + b_4 \le (c_{r-1} + c_r - c_1) + m \le \big((n - 3t - 1) + (n - 3t) - 1\big) + (6t+2) = 2n.$$ With these definitions, every odd integer missing from $A$ rules out at most three $m$; at most once as a sum involving $b_1$, at most once as a sum involving $b_2$ and at most once as a sum involving $b_3$. Since $A$ misses $t$ odd integers, and there are $3t+1$ options for $m$, we are done if such an $i$ with $c_i + c_{i+1} > c_{i+2}$ exists. On the other hand, if no such $i$ exists, it follows by induction that $c_i \ge F_{i+1}$ for all $i$, where $F_i$ is the $i$th Fibonacci number. In particular, since $F_{i+1} \ge \frac{\varphi^i}{\sqrt{5}}$, we see that $$3t+5 \ge n - 3t \ge c_r \ge \frac{\varphi^r}{\sqrt{5}}$$ or $$r \le \frac{\log(3t+5) + \log \sqrt{5}}{\log \varphi},$$ implying $$s \ge t + 4 - \frac{\log(3t+5) + \log \sqrt{5}}{\log \varphi}.$$ We can then apply Lemma~\ref{lotsofevens} to finish off Case II, as long as 
\begin{equation} \label{ffour}
t + 4 - \frac{\log(3t+5) + \log \sqrt{5}}{\log \varphi}> f_4(6t-2).
\end{equation}
One can check that this inequality does indeed hold for all $t \ge 500$, which is satisfied thanks to the assumption $6t+5 \ge n \ge 3000$.
\end{proof}

\subsection{All evens in the lower interval} \label{allinthelower}
By assumption, all evens in $A$ are contained in $E_1 \subset [2, 4t+4]$, and one can check that applying Lemma~\ref{lotsofevens} would work for $t \ge 128$ in this case. The problem is that the assumption $n \ge 6t+6$ does not let us reduce this to a finite computation. Fortunately, for the interval $E_1$ there is an alternative proof method that we have already seen before. 

\begin{proof}[Proof of Case III]
Let $$2 \le 2c_1 < 2c_2 < \cdots < 2c_{t+4} \le 4t+4$$ be $t+4$ even elements of $A$. Then there must be an $i$ with $1 \le i \le t+2$ such that $c_i + c_{i+1} > c_{i+2}$, as otherwise $c_{i}$ would, for all $i$, be at least as large as $F_{i+1}$ contradicting $c_{t+4} \le 2t+2 < F_{t+5}$. Here, the final inequality quickly follows by induction. And, just as in Case II, with $c_i + c_{i+1} > c_{i+2}$ we set $$b_1 := c_i + c_{i+1} - c_{i+2}, \hspace{12pt} b_2 := c_i + c_{i+2} - c_{i+1}, \hspace{12pt} b_3 := c_{i+1} + c_{i+2} - c_{i}, \hspace{12pt} b_4 := m,$$ for some $m \not \equiv b_1 \pmod{2}$ with $1 \le m \le 6t+2$. The proof is then finished analogously.
\end{proof}

\subsection{Almost all evens in the upper interval} \label{almostallintheupper}
While Case III had the cardinality of $E_1$ large, Case IV applies the fact that $\lvert E_3 \rvert$ is large. Once again, note that we cannot directly apply Lemma~\ref{lotsofevens} as we do not have a lower bound on the value of $t$. However, once again we are able to bypass that issue.

\begin{proof}[Proof of Case IV]
If $\lvert E_1 \rvert \le 1$ while $E_2 = \varnothing$, then $\lvert E_3 \rvert$ must be at least $t+3$. In particular, there are at least three distinct even integers $$2c_1, 2c_2, 2c_3 \in A \cap [2n - 4t - 2, 2n - 2t].$$ For a third time, we then choose $$b_1 := c_1 + c_2 - c_3, \hspace{12pt} b_2 := c_1 + c_3 - c_2, \hspace{12pt} b_3 := c_2 + c_3 - c_1, \hspace{12pt} b_4 := m,$$ for some $m$ with $1 \le m \le 6t+2$ and $m \not \equiv b_1 \pmod{2}$. In this case we do have to verify that $b_1, b_2, b_3$ are all positive, but this follows from the fact that they are at least $$(n - 2t - 1) + (n - 2t) - (n - t) = n - 3t - 1 > 0$$ by the assumption $n \ge 6t+6$. Furthermore, while the pairwise sums that do not involve $b_4$ are contained in $A$, for $i \in \{1, 2, 3\}$ we also have $$b_i + b_4 \le (n - t) + (n - t - 1) - (n - 2t - 1) + (6t + 2) = n + 6t + 2 \le 2n,$$ so that all pairwise sums are between $1$ and $2n$. As before, every odd integer missing from $A$ rules out at most three values of $m$; at most once as a sum involving $b_1, b_2, b_3$ respectively. Since $b_i + b_4$ is odd in every case and there are $3t+1$ possibilities for $m$, this finishes the proof.
\end{proof}

\subsection{Evens in both outer intervals} \label{someineither}
We now consider the fifth and final case.

\begin{proof}[Proof of Case V]
Let $a \in E_3$ and, since $\lvert E_1 \rvert \ge 2$, let $2l$ be an even integer in $E_1$ with $l \ge 2$. Let $\eta \in \{1, 2\}$ be such that $l + \eta$ is odd and consider $$b_1 := l - \eta, \qquad b_2 := l + \eta, \qquad b_3 := a - 2 \left \lfloor \frac{a}{4} \right \rfloor - 2m, \qquad b_4 := 2 \left \lfloor \frac{a}{4} \right \rfloor + 2m,$$ with $1 \le m \le 2t+1$. Clearly, $b_1 + b_2$ and $b_3 + b_4$ both belong to $A$, while we claim that all the other pairwise sums are contained in $\{1, 2, \ldots, 2n\}$ and the $b_i$ are all positive. The second claim is easily checked for $b_i \neq b_3$, while $$\frac{a}{2} \ge n - 2t - 1 \ge 4t+5,$$ so that $$b_3 = a - 2 \left \lfloor \frac{a}{4} \right \rfloor - 2m \ge \frac{a}{2} - 4t-2 \ge 3.$$ As for the first claim, the largest of these pairwise sums is $$b_2 + b_4 \le (2t+4) + (n + 4t + 2) = n + 6t + 6 \le 2n.$$ Now, every odd integer missing from $A$ rules out at most two $m$; at most once as a sum involving $b_1$ and at most once as a sum involving $b_2$. Since $A$ misses $t$ odd integers, and there are $2t+1$ options for $m$, this finishes the proof.
\end{proof}

\section{Final remarks} \label{finalthoughts}
In the proof of Case II one can instead first assume $n \le 5t+4$, in which case $n \ge 2500$ would already suffice to give $t \ge 500$. For the remaining $n$ with $5t+5 \le n \le 6t+6$ essentially the same proof still works, but changing the intervals $A \cap [2, 2n - 6t]$ and $A \cap [2n - 6t + 2, 2n]$ to $E_1$ and $E_3$ respectively lowers the right-hand side of Equation~\eqref{ffour} to $f_4(4t+2)$. This prevents having to check $2500 \le n \le 3000$ computationally. However, as we mentioned before, this and further improvements are not required to reach all the way down to $n = 6$,\footnote{Note that $h_4(5) = 5$, due to the example $A := \{1, 2, 3, 4, 6, 7, 8, 9, 10 \}$.} as the constant in Proposition~\ref{mainprop} is already small enough to computationally verify the remaining $n$. In particular, the Lean file that can be found at~\cite{git1} also includes all necessary computations to unconditionally prove Theorem~\ref{main}.

\end{document}